\documentclass[10pt]{amsart}
\usepackage{amsmath,dsfont,amsfonts}
\usepackage{amssymb}
\usepackage{amsthm}
\usepackage{verbatim}
\usepackage{color}

\newcommand{\dive}{\text{\normalfont div}}
\numberwithin{equation}{section}

\headheight=20pt     \topmargin=0pt \textheight=624pt
\textwidth=432pt \oddsidemargin=18pt \evensidemargin=18pt

\newtheorem{theo}{Theorem}[section]
\newtheorem{lemma}[theo]{Lemma}
\newtheorem{pr}[theo]{Proposition}

\newtheorem{cor}[theo]{Corollary}

\newtheorem{rem}[theo]{Remark}




\begin{document}

\title[Strong unique continuation for wave equations ]{Strong unique continuation for second order hyperbolic equations with time independent coefficients}

 \author[]{Sergio~Vessella}
 \address{Dipartimento di Matematica e Informatica ``U. Dini'',
Universit\`{a} di Firenze, Italy}
\email{sergio.vessella@unifi.it}


\keywords{Unique Continuation Property, Stability Estimates, Hyperbolic Equations, Inverse Problems.}

\subjclass[2010]{35R25, 35L; Secondary 35B60 ,35R30}

\begin{abstract}
In this paper we prove that if $u$ is a solution to second order hyperbolic equation  $\partial^2_{t}u+a(x)\partial_{t}u-(\dive_x\left(A(x)\nabla_x u\right)+b(x)\cdot\nabla_x u+c(x)u)=0$ and $u$ is flat on a segment $\{x_0\}\times (-T,T)$ then $u$ vanishes in a neighborhood of $\{x_0\}\times (-T,T)$.
\end{abstract}

\maketitle

\section{Introduction} \label{introduction}
In this paper we study strong unique continuation property for the equation
\begin{equation} \label{4i-65-introd}
\partial^2_{t}u+a(x)\partial_{t}u-L(u)=0, \quad \hbox{in } B_{\rho_0}\times(-T,T),
\end{equation}
where $\rho_0, T$ are given positive numbers, $B_{\rho_0}$ is the ball of $\mathbb{R}^{n}$, $n\geq 2$, of radius $\rho_0$ and center at 0, $a\in L^{\infty}(\mathbb{R}^n)$, $L$ is the second order elliptic operator
\begin{equation} \label{L-4i-65-introd}
L(u)=\dive_x\left(A(x)\nabla_x u\right)+b(x)\cdot\nabla_x u+c(x)u,
\end{equation}
$b\in L^{\infty}(\mathbb{R}^n;\mathbb{R}^n)$, $c\in L^{\infty}(\mathbb{R}^n)$ and $A(x)$ is a real-valued symmetric $n\times n$ matrix that satisfies a uniform ellipticity condition and entries of $A(x)$ are functions of Lipschitz class.

We say that the equation \eqref{4i-65-introd} has the strong unique continuation property (SUCP) if there exists a neighborhood $\mathcal{U}$ of $(-T,T)$ such that for every solution, $u$, to equation \eqref{4i-65-introd} we have

\begin{equation}\label{property}
\left\Vert u\right\Vert_{L^2\left(B_{r}\times(-T,T)\right)}=\mathcal{O}(r^N) \mbox{, } \forall N\in\mathbb{N} \mbox{, as } r\rightarrow 0, \quad \Longrightarrow \quad u=0 , \mbox{ in } \mathcal{U}.
\end{equation}

Property \eqref{property} was proved (if the matrix $A$ belongs to $C^2$), , under the additional condition $T=+\infty$ and is $u$ bounded, by Masuda in 1968, \cite{Ma}. Later on, in 1978, Baouendi and Zachmanoglou, \cite{B-Z}, proved the SUCP whenever the coefficients of equation \eqref{4i-65-introd} are analytic functions. In 1999, Lebeau, \cite{Le}, proved the SUCP for solution to \eqref{4i-65-introd} when $a=0$. The proof, rather intricate, of \cite{Le} does not seem to adapt to the case of nonvanishing $a$. We also refer to \cite{Si-Ve}, \cite{Ve2} where  the SUCP at the boundary and the quantitative estimate of unique continuation related to property was proved when $a=0$.

It is worth noting that SUCP and the related quantitative estimates, has been extensively studied and today well understood in the context of second order elliptic and parabolic equation. Among the extensive literature on the subject here we mention, for the elliptic equations, \cite{AKS}, \cite{hormanderpaper1}, \cite{KoTa1}, and, for the parabolic equations, \cite{A-V}, \cite{EsFe}, \cite{KoTa2}. In the context of elliptic and parabolic equations the quantitative estimates of unique continuation appear in the form of three sphere inequalities \cite{La}, doubling inequalities \cite{GaLi}, or two-sphere one cylinder inequality \cite{EsFeVe}. We refer to \cite{A-R-R-V} and \cite{Ve1} for a more extensive literature concerning the elliptic context and the parabolic context respectively.

In the present paper we prove (Theorem \ref{5-115}) a quantitative estimate of unique continuation from which we derive (Corollary \ref{SUCPcor}) property \eqref{property} for equation \eqref{4i-65-introd}. The crucial step of the proof is Proposition \ref{Crucial}, in such a Proposition \ref{Crucial} we exploit in a suitable way the simple and classical idea of converting a hyperbolic equation into an elliptic equation, see for instance \cite[Ch. 6]{G}.

More precisely we define the function

\begin{equation}
\label{def-v-k-introd}
 v_k(x,y)=\int^T_{-T}u(x,t)\varphi_k(t+iy)dt, \mbox{ }\quad \forall k\in\mathbb{N},
\end{equation}
where $\varphi_k(t+iy)$ is a polynomial with the following property:

\medskip

a) $\varphi_k(t+i0)$ is an approximation of Dirac's $\delta$-function,

\smallskip

b) $\varphi_k(\pm T+iy)=\mathcal{O}((C|y|)^k)$ as $k\rightarrow \infty$ and $|y|\leq 1$, where $C$ is a constant.

\medskip

In this way functions  $v_k$ turn out solutions to the elliptic equation
\begin{equation}\label{equ-for-v-introd}
\partial^2_{y}v_k-ia(x)\partial_{y}v_k+L(v_k)=F_k(x,y), \quad \hbox{in } B_{1}\times\mathbb{R},
\end{equation}
where $F_k=\mathcal{O}((C|y|)^k)$, as $k\rightarrow \infty$ and $|y|\leq 1$. This behavior of $F_k$ allows us to handle in a suitable way a Carleman estimate with singular weight for second order elliptic operators, see Section \ref{preliminary} below, in such a way to get $u(x,0)=0$ for $x\in B_{\rho}$, where $\rho\leq \rho_0/C$. Similarly we prove for every $t\in (-T,T)$, $u(\cdot,t)=0$  in $B_{\rho(t)}$, where $\rho(t)=(1-tT^{-1})\rho$. So that we obtain \eqref{property} with $\mathcal{U}=\bigcup_{t\in (-T,T)}(B_{\rho(t)}\times \{t\})$. As a consequence of this result and using the weak unique continuation property proved in \cite{hormanderpaper2}, \cite{Ro-Zu} and  \cite{Ta}, see also \cite{isakovlib2}, and \cite{B-K-L}, for the related quantitative estimates, we have that $u=0$ in the domain of dependence of $\mathcal{U}$.

The quantitative estimate of unique continuation that we prove in Theorem \ref{5-115} can be read, roughly speaking, as a continuous dependence estimate of $u_{_{|\mathcal{U}}}$ from $u_{_{|{B_{r_0}\times(-T,T)}}}$, where $r_0$ is arbitrarily small. The sharp character of such a continuous dependence result is related to the logarithmic character of this estimate, that, at the light of counterexample of John \cite{J}, cannot be improved and to the fact that the quantitative estimate implies the SUCP property. The quantitative estimate of strong unique continuation (at the interior and at the boundary) was a crucial tool, see \cite{Ve3}, to prove sharp stability estimate for inverse problems with unknown boundaries for wave equation $\partial^2_t-\dive_x\left(A(x)\nabla_x u\right)=0$.

Before concluding this Introduction we mention an open question(to the author knowledge). Such an open question concerns the SUCP, \eqref{property}, for the second order hyperbolic equation with coefficients that are analytic in variable $t$ and smooth enough (but not analytic) in variables $x$. This is, for instance, the case of the equation

\begin{equation*} \label{4i-65-introd-t}
\partial^2_{t}u+a(x,t)\partial_{t}u-\Delta_xu=0,
\end{equation*}
where $a(x,t)$ is smooth enough w.r.t $x$ and analytic w.r.t. $t$. Concerning this topic we mention \cite{Lu} in which it is proved that if $u$ satisfies the conditions: (a) $(-T,T)\times \{0\}\cap{\rm supp}u$ is compact and (b $D^{j}u(x,t)=\mathcal{O}\left(e^{-k/|x|}\right)$, $j=1,2$, for every $k$ as $x\rightarrow 0$, $t\in (-T,T)$, then $u$ vanishes in a neighborhood of $(-T,T)$.

\medskip

The plan of the paper is as follows. In Section \ref{MainRe} we state the main result of this paper, in Section \ref{SUCP_Estimates} we prove the main theorem.

\section{The main results}\label{MainRe}
\subsection{Notation and Definition} \label{sec:notation}
Let $n\in\mathbb{N}$, $n\geq 2$. For any $x\in\mathbb{R}^n$, we will denote $x=(x',x_n)$, where $x'=(x_1,\ldots,x_{n-1})\in\mathbb{R}^{n-1}$, $x_n\in\mathbb{R}$ and $|x|=\left(\sum_{j=1}^nx_j^2\right)^{1/2}$.
Given $r>0$, we will denote by $B_r$, $B'_r$ $\widetilde{B}_r$ the ball of $\mathbb{R}^{n}$, $\mathbb{R}^{n-1}$ and $\mathbb{R}^{n+1}$ of radius $r$ centered at 0.
For any open set $\Omega\subset\mathbb{R}^n$ and any function (smooth enough) $u$  we denote by $\nabla_x u=(\partial_{x_1}u,\cdots, \partial_{x_n}u)$ the gradient of $u$. Also, for the gradient of $u$ we use the notation $D_xu$. If $j=0,1,2$ we denote by $D^j_x u$ the set of the derivatives of $u$ of order $j$, so $D^0_x u=u$, $D^1_x u=\nabla_x u$ and $D^2_xu$ is the hessian matrix $\{\partial_{x_ix_j}u\}_{i,j=1}^n$. Similar notation are used whenever other variables occur and $\Omega$ is an open subset of $\mathbb{R}^{n-1}$ or a subset $\mathbb{R}^{n+1}$. By $H^{\ell}(\Omega)$, $\ell=0,1,2$, we denote the usual Sobolev spaces of order $\ell$ (in particular, $H^0(\Omega)=L^2(\Omega)$), with the standard norm
\begin{equation*}
 \left\Vert v(x)\right\Vert_{H^{\ell}(\Omega)}=\left(\sum_{0\leq j\leq \ell}\int_{\Omega}\left\vert D^jv(x)\right\vert^2dx\right)^{1/2}.
\end{equation*}

For any interval $J\subset \mathbb{R}$ and $\Omega$ as above we denote
\[\mathcal{W}\left(J;\Omega\right)=\left\{u\in C^0\left(J;H^2\left(\Omega\right)\right): \partial_t^\ell u\in C^0\left(J;H^{2-\ell}\left(\Omega\right)\right), \ell=1,2\right\}.\]

We shall use the letters $c, C,C_0,C_1,\cdots$ to denote constants. The value of the constants may change from line to line, but we shall specified their dependence everywhere they appear. Generally we will omit the dependence of various constants by $n$.

\subsection{Statements of the main results}\label{QEsucp}

Let $\rho_0>0$, $T$, $\lambda\in(0,1]$, $\Lambda>0$ and $\Lambda_1>0$  be given number. Let $A(x)=\left\{a^{ij}(x)\right\}^n_{i,j=1}$ be a real-valued symmetric $n\times n$ matrix whose entries are measurable functions and they satisfy the following conditions
\begin{subequations}
\label{1-65}
\begin{equation}
\label{1-65a}
\lambda\left\vert\xi\right\vert^2\leq A(x)\xi\cdot\xi\leq\lambda^{-1}\left\vert\xi\right\vert^2, \quad \hbox{for every } x, \xi\in\mathbb{R}^n,
\end{equation}
\begin{equation}
\label{2-65}
\left\vert A(x_{\ast})-A(x)\right\vert\leq\frac{\Lambda}{\rho_0} \left\vert x-y \right\vert, \quad \hbox{for every } x_{\ast}, x\in\mathbb{R}^n.
\end{equation}
\end{subequations}

Let $b\in L^{\infty}(\mathbb{R}^n; \mathbb{R}^n)$ and $a, c\in L^{\infty}(\mathbb{R}^n)$ satisfy

\begin{equation} \label{bound-b}
 T\left\vert a(x)\right\vert+T^2\rho_0^{-1} \left\vert b(x)\right\vert+T^{2}\left\vert c(x)\right\vert\leq\Lambda_1, \quad \hbox{for almost every } x\in\mathbb{R}^n,
\end{equation}

Let
\begin{equation} \label{L-4i-65}
L(u)=\dive_x\left(A(x)\nabla_x u\right)+b(x)\cdot\nabla_x u+c(x)u.
\end{equation}

Let $u\in\mathcal{W}\left([-T,T];B_{\rho_0}\right)$ be a solution to

\begin{equation} \label{4i-65}
\partial^2_{t}u+a(x)\partial_{t}u-L(u)=0, \quad \hbox{a.e. } \hbox{in } B_{\rho_0}\times(-T,T).
\end{equation}
Let $\varepsilon$ and $H$ be given positive numbers and let $r_0\in (0,\rho_0]$. We assume

\begin{equation} \label{errore-verticale}
\rho_0^{-n}T^{-1}\int_{-T}^{T}\int_{B_{r_0}}\left\vert u(x,t)\right\vert^2dxdt\leq\varepsilon^2
\end{equation}
and

\begin{equation} \label{limitaz-a-priori}
\max_{t\in [-T,T]}\left(\rho_0^{-n}\int_{B_{\rho_0}}\left\vert u(x,t)\right\vert^2 dx+\rho_0^{-n+1}\int_{B_{\rho_0}}\left\vert \partial_tu(x,t)\right\vert^2dx\right)\leq H^2.
\end{equation}

\begin{theo}\label{5-115}
Let $u\in\mathcal{W}\left([-T,T];B_{\rho_0}\right)$ be a weak solution to \eqref{4i-65} and let \eqref{1-65}, \eqref{bound-b}, \eqref{errore-verticale} and \eqref{limitaz-a-priori} be satisfied. For every $\alpha\in (0,1/2)$ there exist constants $s_0\in (0,1)$ and $C\geq 1$ depending on $\lambda$, $\Lambda$, $\Lambda_1$, $\alpha$ and $T\rho_0^{-1}$ only such that for every $t_0\in (-T,T)$ and every $0<r_0\leq \rho\leq s_0\rho_0$ the following inequality holds true
\begin{equation}
\label{SUCP-final}
\rho_0^{-n}\int_{B_{(1-|t_0|T^{-1})\rho}}\left\vert u(x,t_0) \right\vert^2dx\leq \frac{C(\rho_0\rho^{-1})^C(H+e\varepsilon)^2}{\left(\theta\log \left( \frac{H+e\varepsilon}{\varepsilon}\right) \right)^{\alpha}},
\end{equation}
where
\begin{equation}
\label{theta-0}
\theta_0=\frac{\log (\rho_0/C\rho)}{\log (\rho_0/r_0)}.
\end{equation}
\end{theo}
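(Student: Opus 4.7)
The plan is to follow the strategy outlined in the introduction: convert the hyperbolic equation into an elliptic one by a holomorphic extension in time, apply a Carleman estimate with singular weight to the resulting elliptic equation, and then extract from this a quantitative unique continuation estimate for $u$ on each time slice. The time-independence of the coefficients is essential for this conversion, since it allows the holomorphic extension in the time variable to produce an equation of the same form with $\partial_t^2$ replaced by $-\partial_y^2$.

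First, I would construct the sequence of polynomials $\varphi_k(\zeta)$ alluded to in the introduction (a rescaled de la Vall\'ee-Poussin or Chebyshev-type kernel on $(-T,T)$), normalized so that $\varphi_k(t+i0)$ approximates $\delta_0$ in a quantitative sense and $|\varphi_k(\pm T + iy)| \leq (C|y|)^k$ for $|y|\leq 1$. Setting
\[v_k(x,y) = \int_{-T}^{T} u(x,t)\,\varphi_k(t+iy)\,dt,\]
two integrations by parts in $t$, combined with equation \eqref{4i-65} and the identity $\partial_y^2 \varphi_k = -\partial_t^2 \varphi_k$ (holomorphicity of $\varphi_k$), give that $v_k$ is an $H^2_{\mathrm{loc}}$ solution of the elliptic equation \eqref{equ-for-v-introd}, i.e.
\[\partial_y^2 v_k - i\,a(x)\,\partial_y v_k + L(v_k) = F_k(x,y) \quad \text{in } B_{\rho_0}\times \mathbb{R},\]
where $F_k$ consists of boundary contributions at $t=\pm T$ and satisfies $|F_k(x,y)| \leq H\,(C|y|)^k$ in $B_{\rho_0}\times(-1,1)$, in view of \eqref{limitaz-a-priori} and property (b) of $\varphi_k$. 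This is the content of Proposition \ref{Crucial}, which I would invoke.

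Second, I would apply the Carleman estimate with singular weight $r^{-\tau}$ (where $r^2=|x|^2+y^2$) for the elliptic operator $\partial_y^2 + L - i a\,\partial_y$ referenced in Section \ref{preliminary}, on balls $\widetilde{B}_\rho \subset \mathbb{R}^{n+1}$ with $\rho\leq s_0\rho_0$, to the cut-off of $v_k$. The $(C|y|)^k$ decay of $F_k$ near the hyperplane $\{y=0\}$ allows the right-hand side $\int r^{-2\tau}|F_k|^2$ to be absorbed by the left-hand side for any fixed $\tau$, once $k$ is large enough relative to $\tau$. Standard manipulations of the Carleman inequality (optimizing the weight over a dyadic sequence of radii) then yield a three-ball inequality of the form
\[\|v_k\|_{L^2(\widetilde B_\rho)} \leq C\bigl(\|v_k\|_{L^2(\widetilde B_{r_0})} + H\,(C\rho)^k\bigr)^{\theta_0}\bigl(\|v_k\|_{L^2(\widetilde B_{\rho_0/C})}\bigr)^{1-\theta_0}\]
with $\theta_0$ as in \eqref{theta-0}. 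Because $\varphi_k(t+i0)$ concentrates at $t=0$, one has the quantitative approximation $\|v_k(\cdot,0)-u(\cdot,0)\|_{L^2(B_\rho)} = o(1)$ as $k\to \infty$ (controlled by $H$ and the modulus of continuity in $t$ of $u$, itself controlled via the equation), while on $B_{r_0}\times(-1,1)$ the $L^2$ norm of $v_k$ is dominated by $\varepsilon$ through \eqref{errore-verticale} up to polynomial factors in $k$. Optimizing the two free parameters $k$ and $\rho$ against $\varepsilon$ produces the logarithmic bound on $\|u(\cdot,0)\|_{L^2(B_\rho)}$. Finally, to treat an arbitrary $t_0\in(-T,T)$ I would repeat the construction with $(-T,T)$ replaced by the symmetric interval around $t_0$ of half-length $T-|t_0|$; the radius of concentration then naturally becomes $(1-|t_0|T^{-1})\rho$, yielding \eqref{SUCP-final}.

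The main obstacle I anticipate is the joint optimization of the three parameters — the Carleman exponent $\tau$, the polynomial degree $k$, and the intermediate scale $\rho$ — against $r_0$, $\varepsilon$ and $H$, in order to obtain the logarithmic rate with exponent $\alpha$ arbitrarily close to $1/2$. This requires balancing the exponential-in-$\tau$ gain from the Carleman weight near the origin against the $(C\rho)^k$ decay of $F_k$ and the polynomial-in-$k$ growth of $\varphi_k$ away from the real axis, while tracking the constants' dependence on $\lambda,\Lambda,\Lambda_1$ and $T\rho_0^{-1}$. The first-order term $-ia\,\partial_y v_k$ is a technical but minor complication, absorbed in the Carleman estimate at the price of enlarging $\tau$; and the passage from $u(\cdot,0)=0$ on $B_{\rho(t_0)}$ (in the sense of the limit quantitative bound) to the stability estimate \eqref{SUCP-final} is handled by the standard concavity trick applied to the three-ball-type interpolation inequality obtained from the Carleman estimate.
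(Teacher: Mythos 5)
Your proposal follows the same route as the paper: the holomorphic time-extension via $\varphi_k(z)=\mu_k(1-z^2)^k$ to produce $v_k$ solving an elliptic equation with small forcing $F_k$, a singular-weight Carleman estimate applied to a cutoff of $v_k$, and a joint optimization of the Carleman parameter $\tau$ and the degree $k$ (the constraint $k\geq\tau$, which the paper enforces by taking $k=\tau$, is what makes $\int(|x|^2+y^2)^{k-\tau}$ controllable) against the $O(\log k/\sqrt k)$ approximation error $\|v_k(\cdot,0)-u(\cdot,0)\|_{L^2}$, giving the rate $\alpha<1/2$. One point to make precise: the paper reduces $t_0\neq 0$ to $t_0=0$ by a \emph{simultaneous} rescaling of both space and time by the factor $1-|t_0|T^{-1}$, so that the ratio $T/\rho_0$ (on which $s_0$ and $C$ depend) is preserved and the constants stay uniform in $t_0$; your phrase ``repeat on the shorter interval'' captures the idea but leaves implicit why the spatial ball must be shrunk by exactly the same factor. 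A small slip, harmless for the sketch, is the claim that $\|v_k(\cdot,y)\|_{L^2(B_{r_0})}$ is dominated by $\varepsilon$ ``up to polynomial factors in $k$''; the bound is in fact $\lesssim 2^k\mu_k\varepsilon$, exponential in $k$, but these exponentials are absorbed in the same way as the $\psi_0$-ratios into the final factor $(C_3 s r_1^{-1})^{2k+1}$.
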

The proof of Theorem \ref{5-115} is given in Section \ref{SUCP_Estimates}.

\bigskip

The proof of the following Corollary is standard (see, for instance, \cite[Remark 2.2]{Ve2}), but we give it for the reader convenience.
\begin{cor}[\textbf{Strong Unique Continuation Property}]\label{SUCPcor}
Let $u\in\mathcal{W}\left([-T,T];B_{\rho_0}\right)$ be a weak solution to \eqref{4i-65}. Assume that \eqref{1-65} and \eqref{bound-b} be satisfied. We have that, if
$$\left(\rho_0^{-n}T^{-1}\int^T_{-T}\int_{B_{r_0}}\left\vert u(x,t)\right\vert^2dxdt\right)^{1/2}=O(r_0^N)\mbox{, } \forall N\in\mathbb{N} \mbox{, as } r_0\rightarrow 0,$$
then
\begin{equation}
\label{ucp-rem}
u(\cdot,t)=0 \mbox{, for } |x|+\frac{\rho_0|t|}{T}\leq s_0\rho_0.
\end{equation}

\end{cor}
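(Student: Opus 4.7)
The plan is to apply Theorem \ref{5-115} for each fixed $t_0$ and radius $\rho$ with $\varepsilon=\varepsilon(r_0)$ decaying faster than any polynomial, and then to let $r_0\to 0$. The logarithmic denominator in \eqref{SUCP-final} will diverge fast enough to force the left-hand side to vanish.

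Concretely, since $u\in\mathcal{W}([-T,T];B_{\rho_0})$, the quantity
$$H^2:=\max_{t\in[-T,T]}\left(\rho_0^{-n}\!\int_{B_{\rho_0}}|u(x,t)|^2\,dx+\rho_0^{-n+1}\!\int_{B_{\rho_0}}|\partial_t u(x,t)|^2\,dx\right)$$
is finite and \eqref{limitaz-a-priori} holds with this $H$. Setting
$$\varepsilon(r_0):=\left(\rho_0^{-n}T^{-1}\int_{-T}^{T}\!\!\int_{B_{r_0}}|u(x,t)|^2\,dx\,dt\right)^{1/2},$$
the hypothesis yields, for every $N\in\mathbb{N}$, a constant $C_N>0$ with $\varepsilon(r_0)\leq C_N r_0^N$ for all sufficiently small $r_0$; in particular $\varepsilon(r_0)\to 0$. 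Now I fix $\alpha\in(0,1/2)$ and let $s_0$, $C$ be the constants produced by Theorem \ref{5-115}, shrinking $s_0$ if necessary so that $Cs_0<1$. For any $t_0\in(-T,T)$, any $\rho\in(0,s_0\rho_0]$, and any $r_0\in(0,\rho)$, Theorem \ref{5-115} provides
$$\rho_0^{-n}\!\!\int_{B_{(1-|t_0|/T)\rho}}\!\!|u(x,t_0)|^2\,dx\;\leq\;\frac{C(\rho_0/\rho)^C\bigl(H+e\varepsilon(r_0)\bigr)^2}{\bigl[\theta_0(r_0)\log\bigl((H+e\varepsilon(r_0))/\varepsilon(r_0)\bigr)\bigr]^{\alpha}},$$
with $\theta_0(r_0)=\log(\rho_0/C\rho)/\log(\rho_0/r_0)$.

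The numerator stays bounded as $r_0\to 0$, so it suffices to show that the bracket in the denominator tends to $+\infty$. The super-polynomial decay of $\varepsilon(r_0)$ gives $\log(1/\varepsilon(r_0))\geq N\log(1/r_0)-\log C_N$, hence
$$\theta_0(r_0)\log\frac{H+e\varepsilon(r_0)}{\varepsilon(r_0)}\;\geq\;\frac{\log(\rho_0/C\rho)}{\log(\rho_0/r_0)}\bigl(N\log(1/r_0)-\log C_N\bigr)\;\xrightarrow[r_0\to 0]{}\;N\log(\rho_0/C\rho),$$
because $\log(1/r_0)/\log(\rho_0/r_0)\to 1$. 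Since $N$ is arbitrary and $\log(\rho_0/C\rho)>0$, the liminf is $+\infty$; letting $r_0\to 0$ with $\rho$ and $t_0$ fixed forces $u(\cdot,t_0)\equiv 0$ on $B_{(1-|t_0|/T)\rho}$. Taking $\rho=s_0\rho_0$ and using $s_0\leq 1$, the inequality $|x|+\rho_0|t|/T\leq s_0\rho_0$ implies $|x|\leq s_0\rho_0-\rho_0|t|/T\leq(1-|t|/T)s_0\rho_0$, which lies in the vanishing ball; this proves \eqref{ucp-rem}. The only nontrivial point is the divergence calculation, where the super-polynomial decay of $\varepsilon(r_0)$ is exactly what is needed to overcome the normalizing factor $\log(\rho_0/r_0)$ appearing in $\theta_0(r_0)$.
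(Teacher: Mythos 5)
Your proof is correct and follows essentially the same route as the paper: apply Theorem \ref{5-115} with the super-polynomially small $\varepsilon(r_0)$, observe that the normalized logarithm $\theta_0\log\bigl((H+e\varepsilon)/\varepsilon\bigr)$ stays bounded below by $N\log(\rho_0/C\rho)$ in the limit $r_0\to 0$, and let $N\to\infty$. The only cosmetic differences are that the paper argues by contradiction with $H$ normalized to $1$ and treats $t_0=0$ explicitly, while you handle a general $t_0$ directly and spell out the geometric inclusion $\{|x|+\rho_0|t|/T\le s_0\rho_0\}\subset\bigcup_t B_{(1-|t|/T)s_0\rho_0}\times\{t\}$, which the paper leaves implicit.
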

\begin{proof}
We consider the case $t=0$, similarly we could proceed for $t\neq0$. If $\left\Vert u(\cdot,0) \right\Vert_{L^2\left(B_{s_0\rho_0}\right)}=0$ there is nothing to proof, otherwise, if
\begin{equation}
\label{ucp-cor-1}
\left\Vert u(\cdot,0) \right\Vert_{L^2\left(B_{s_0\rho_0}\right)}>0
\end{equation}
we argue by contradiction. By \eqref{ucp-cor-1} it is not restrictive to assume that
\begin{equation} \label{limitaz-a-priori-cor}
\max_{t\in [-T,T]}\left(\rho_0^{-n}\int_{B_{\rho_0}}\left\vert u(x,t)\right\vert^2 dx+\rho_0^{-n+1}\int_{B_{\rho_0}}\left\vert \partial_tu(x,t)\right\vert^2dx\right)=1.
\end{equation}
Now we apply inequality \eqref{SUCP-final} with $\varepsilon=C_Nr_0^N$, $H=1$ and passing to the limit as $r_0\rightarrow 0$ we derive
\begin{equation}
\label{ucp-cor-2}
\left\Vert u(\cdot,0) \right\Vert_{L^2\left(B_{s_0\rho_0}\right)}\leq Cs_0^{-C}N^{-\alpha/2} \mbox{, } \forall N\in\mathbb{N},
\end{equation}
by passing again to the limit as $N\rightarrow 0$, by \eqref{ucp-cor-2}, we obtain $\left\Vert u(\cdot,0) \right\Vert_{L^2\left(B_{s_0\rho_0}\right)}=0$ that contradicts \eqref{ucp-cor-1}.
\end{proof}

\subsection{Auxiliary result: Carleman estimate with singular weight} \label{preliminary} In order to prove Theorem \ref{5-115} we need a Carleman estimate proved by several authors, here we recall \cite{AKS}, \cite{hormanderpaper1}. In order to control the dependence of the various constants, we use here a version of such a Carleman estimate proved, in the context of parabolic operator, in  \cite{EsVe}, see also \cite[Section 8]{BK}.

First we introduce some notation.
Let $\mathcal{P}$ be the elliptic operator
\begin{equation}
\label{oper}
\mathcal{P}(w):=\partial^2_{y}w+L(w)-ia(x)\partial_{y}w.
\end{equation}
Denote
\begin{equation}
\label{6-96}
\varrho(x,y)=\left(A^{-1}(0)x\cdot x+y^2\right)^{1/2},
\end{equation}

\begin{equation}
\label{palla n+1}
\widetilde{B}^{\varrho}_{r}= \left\{(x,y)\in \mathbb{R}^{n+1}: \varrho(x,y)< r\right\}, \quad \hbox{  } r>0.
\end{equation}

Notice that
\begin{equation}
\label{5-119}
\widetilde{B}^{\varrho}_{\sqrt{\lambda} r}\subset \widetilde{B}_r \subset \widetilde{B}^{\varrho}_{r/\sqrt{\lambda}}, \quad \hbox{ }\forall r>0.
\end{equation}

\begin{theo}\label{Carleman}
Let $\mathcal{P}$ be the operator \eqref{oper} and assume that \eqref{1-65} is satisfied. There exists constants $C_{\ast}>1$ depending on $\lambda$, and $\Lambda$ only and  $\tau_0>1$ depending on $\lambda$, $\Lambda$ and $\Lambda_1$ only such that, denoting

\begin{subequations}
\label{4-5-6-96}
\begin{equation}
\label{4-96}
\Psi(r)=r\exp \left(\int^r_0\frac{e^{-C_{\ast}\eta}-1}{\eta}d\eta\right),
\end{equation}
\begin{equation}
\label{5-96}
\psi(x,y)=\Psi\left(\varrho(x,y)/2\sqrt{\lambda}\right),
\end{equation}
\end{subequations}
for every $\tau\geq \tau_0$ and $w\in C^{\infty}_0\left(\widetilde{B}^{\varrho}_{2\sqrt{\lambda}/C_{\ast}}\setminus\{0\}\right)$ we have

\begin{gather}
\label{6-118}
\int_{\mathbb{R}^{n+1}}\left(\tau\psi^{1-2\tau}\left\vert\nabla_{x,y}w\right\vert^2+
\tau^3\psi^{-1-2\tau} \left\vert w\right\vert^2\right) dxdy
\leq C_{\ast}\int_{{\mathbb{R}^{n+1}}}\psi^{2-2\tau} \left\vert \mathcal{P}(w)\right\vert^2dxdy.
\end{gather}

\end{theo}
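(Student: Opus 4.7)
\smallskip

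The plan is to prove the estimate by first reducing to the case where the principal part has frozen coefficients at the origin, then conjugating with the weight $\psi^{-\tau}$, and finally extracting positivity of the resulting commutator form.

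\textbf{Step 1: Reduction and absorption of lower order terms.} Writing
\[
\mathcal{P}(w) = \mathcal{P}_0(w) + b(x)\cdot\nabla_x w + c(x) w - ia(x)\partial_y w, \qquad \mathcal{P}_0(w) := \partial_y^2 w + \dive_x(A(x)\nabla_x w),
\]
the hypothesis \eqref{bound-b} (together with the support condition $w\in C^\infty_0(\widetilde{B}^{\varrho}_{2\sqrt{\lambda}/C_\ast}\setminus\{0\})$, which forces the lower order terms to live on a ball of radius comparable to $1$ after the natural rescaling) gives
\[
|\mathcal{P}(w)|^2 \;\geq\; \tfrac{1}{2}|\mathcal{P}_0(w)|^2 - C_{\Lambda_1}\bigl(|\nabla_{x,y}w|^2 + |w|^2\bigr).
\]
If I can prove the estimate for $\mathcal{P}_0$ in the form
\[
\int \tau\psi^{1-2\tau}|\nabla_{x,y}w|^2 + \tau^3 \psi^{-1-2\tau}|w|^2 \,dxdy \;\leq\; C_\ast \int \psi^{2-2\tau}|\mathcal{P}_0(w)|^2\,dxdy,
\]
then, since $\psi^{1-2\tau} \geq c \psi^{2-2\tau}$ and $\psi^{-1-2\tau} \geq c \psi^{2-2\tau}$ on the support of $w$, taking $\tau\geq \tau_0(\lambda,\Lambda,\Lambda_1)$ large enough absorbs the error $C_{\Lambda_1}(|\nabla_{x,y}w|^2 + |w|^2)$ into the left hand side. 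Moreover, a linear change of variables $\tilde x = A^{1/2}(0)x$ normalizes $A(0)=I$ and turns $\varrho$ into the Euclidean distance from $0$, at the cost of constants depending only on $\lambda$; so from now on I assume $A(0)=I$ and work with $\mathcal{P}_0$.

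\textbf{Step 2: Conjugation.} Set $\phi(x,y) = -\log\psi(x,y)$ and substitute $w = e^{\tau\phi} v = \psi^{-\tau}v$. A direct computation gives
\[
e^{-\tau\phi}\mathcal{P}_0(e^{\tau\phi} v) = L_\tau v, \qquad L_\tau v = \partial_y^2 v + \dive_x(A\nabla_x v) + \tau^2 q(\nabla\phi)v + 2\tau\bigl(\phi_y \partial_y v + A\nabla_x\phi\cdot\nabla_x v\bigr) + \tau R\phi \cdot v,
\]
where $q(\xi,\eta) = \eta^2 + A\xi\cdot\xi$ is the principal symbol and $R\phi$ collects second derivatives of $\phi$. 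Decompose $L_\tau = S + A$ into its formally self-adjoint and anti-self-adjoint parts (in $L^2(dxdy)$): the anti-self-adjoint part carries the first-order factor $2\tau(\phi_y\partial_y + A\nabla_x\phi\cdot\nabla_x)$ (plus half of $R\phi$), while $S$ carries the second-order principal part and $\tau^2 q(\nabla\phi)$. Expanding
\[
\int |L_\tau v|^2 \,dxdy \;=\; \int |Sv|^2 + \int |Av|^2 + \int \bigl([S,A]v\bigr)\,\overline{v}\,dxdy
\]
reduces the problem to showing that $[S,A]$ is bounded below by a positive quadratic form of the right size, namely $c\tau|\nabla_{x,y}v|^2 + c\tau^3 \psi^{-2}|v|^2$ in weighted norms.

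\textbf{Step 3: The pseudoconvexity/commutator computation --- the main obstacle.} The crucial inequality is a pointwise pseudoconvexity bound of the form
\[
\psi^2\bigl([S,A]v\bigr)\bar v \;\geq\; c\tau\,\psi^2|\nabla_{x,y}v|^2 + c\tau^3 |v|^2 - \text{error}(A),
\]
where $\mathrm{error}(A)$ arises because $A(x)$ is only Lipschitz, not smooth. This is where the choice of $\Psi$ in \eqref{4-96} enters: the correction factor $\exp\!\int_0^r \frac{e^{-C_\ast\eta}-1}{\eta}d\eta$ is designed so that the weight $\phi = -\log\Psi(\varrho/2\sqrt\lambda)$ satisfies, for $C_\ast$ large enough depending on $\lambda$ and $\Lambda$,
\[
D^2\phi(X,X) \;\geq\; C_\ast \varrho^{-1}|X|^2 - (\text{Lipschitz error of }A),
\]
so that the Lipschitz perturbation of $A$ is dominated by the extra concavity built into $\Psi$. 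Concretely, $\Psi(r) \approx r$ for small $r$ but its logarithmic derivative $\Psi'/\Psi$ has the $e^{-C_\ast r}/r$ behavior that makes the Hessian of $\phi$ strictly positive in the $\varrho$-direction, while the $O(r^{-1})$ singularity produces the $\tau^3$ gain on the $|v|^2$ term. Integrating the pointwise inequality against $|v|^2$ and $|\nabla v|^2$, then returning from $v$ back to $w$ via $v = \psi^\tau w$, produces exactly the weighted norms $\psi^{1-2\tau}$ and $\psi^{-1-2\tau}$ of \eqref{6-118}.

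The main difficulty, as in \cite{AKS, hormanderpaper1} and in the parabolic version of \cite{EsVe, BK}, is carrying out the commutator calculation carefully enough to keep track of the constants explicitly in terms of $\lambda$, $\Lambda$, $\Lambda_1$ (and not on higher smoothness of $A$); the non-smoothness of $A$ means derivatives of $A$ appear only in $L^\infty$, so all such terms must be controlled either by the gain from the correction factor in $\Psi$ or absorbed into the left hand side by taking $\tau\geq \tau_0$ sufficiently large. A convenient way to finalize the argument is to mollify $A$, derive the estimate with constants independent of the regularization, and pass to the limit.
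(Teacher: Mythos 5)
The paper does not actually prove Theorem \ref{Carleman}: it explicitly states that this Carleman estimate is taken from the literature (\cite{AKS}, \cite{hormanderpaper1}, and — for the version that tracks constants — \cite{EsVe} and \cite[Section 8]{BK}). Your proposal correctly reconstructs the method that underlies those references: the reduction to the frozen-coefficient principal part with absorption of $L^\infty$ lower-order terms by taking $\tau_0$ large (your Step 1 is fine, and the power bookkeeping $\psi^{2-2\tau}\le C_\ast^{-1}\psi^{1-2\tau}$, $\psi^{2-2\tau}\le C_\ast^{-3}\psi^{-1-2\tau}$ on $\mathrm{supp}\,w$ is exactly what makes the absorption work), the conjugation $v=\psi^\tau w$ and the self-/anti-self-adjoint split with the identity $\|L_\tau v\|^2=\|Sv\|^2+\|Av\|^2+\langle[S,A]v,v\rangle$, and the correct interpretation of the role of $\Psi$ in \eqref{4-96}: its logarithmic derivative is $\Psi'(r)/\Psi(r)=e^{-C_\ast r}/r$, and the $e^{-C_\ast r}$ factor supplies the extra convexity needed to dominate the Lipschitz error in $A$, while the $r^{-1}$ singularity yields the $\tau^3$ gain. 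This is genuinely the right approach, and identifying it is most of the conceptual content.

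The genuine gap is that Step 3 — the only place where any real estimate is established — is asserted rather than carried out. You never compute $[S,A]$, never derive the claimed pointwise pseudoconvexity bound for $\phi=-\log\Psi(\varrho/2\sqrt\lambda)$, never verify that the Lipschitz error in $A$ is indeed dominated by the correction built into $\Psi$ for a suitable choice of $C_\ast$ depending only on $\lambda,\Lambda$, and never do the integration by parts that converts the pointwise bound into the weighted $L^2$ inequality \eqref{6-118}. As you yourself write, this is ``the main obstacle,'' and without it the argument is a roadmap, not a proof. One small error in the sketch: in your decomposition $L_\tau=S+A$, the anti-self-adjoint part is $A=2\tau\bigl(\phi_y\partial_y+A\nabla_x\phi\cdot\nabla_x\bigr)+\tau R\phi$ — the \emph{whole} zeroth-order correction $\tau R\phi$, not half of it — and correspondingly $S=\partial_y^2+\dive_x(A\nabla_x\cdot)+\tau^2 q(\nabla\phi)$; you wrote ``(plus half of $R\phi$).'' This does not affect the strategy but would matter if the commutator were actually computed.
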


\begin{rem}
We emphasize that $$\Psi(r)\simeq r, \quad \hbox{ as } r\rightarrow 0.$$ Moreover $\Psi$ is an increasing and concave function and there exists $C>1$ depending on $\lambda$, and $\Lambda$ such that
\begin{equation}
\label{Psi}
C^{-1}r\leq\Psi\left(r\right)\leq r, \quad \hbox{ } \forall r\in (0,1].
\end{equation}

\end{rem}

\section{Proof of Theorem \ref{5-115}} \label{SUCP_Estimates}
The primary step to achieve Theorem \ref{5-115} consists in proving the following

\begin{pr}\label{Crucial}
Let us assume $\rho_0=1$ and $T=1$. Let $u\in\mathcal{W}\left([-1,1];B_1\right)$ be a weak solution to \eqref{4i-65} and let \eqref{1-65}, \eqref{bound-b}, \eqref{errore-verticale} and \eqref{limitaz-a-priori} be satisfied. For every $\alpha\in (0,1/2)$ there exist constants $s_0\in (0,1)$ and $C\geq 1$ depending on $\lambda$, $\Lambda$, $\Lambda_1$ and $\alpha$ only such that for every $0<r_0\leq s\leq s_0$ the following inequality holds true

\begin{equation}
\label{SUCP}
\int_{B_{s}}\left\vert u(x,0) \right\vert^2dx\leq \frac{Cs^{-C}(H+e\varepsilon)^2}{\left(\theta\log \left( \frac{H+e\varepsilon}{\varepsilon}\right) \right)^{\alpha}},
\end{equation}
where
\begin{equation}
\label{theta}
\theta=\frac{\log (1/Cs)}{\log (1/r_0)}.
\end{equation}
\end{pr}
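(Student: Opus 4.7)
The plan follows the strategy outlined in the Introduction: convert the hyperbolic problem into an elliptic one via a polynomial ``quasi-delta'' transform $v_k(x,y) = \int_{-1}^1 u(x,t)\varphi_k(t+iy)\,dt$, then apply the singular-weight Carleman estimate of Theorem \ref{Carleman} to $v_k$. I would take $\varphi_k(z) = \gamma_k(1 - z^2)^k$, with $\gamma_k \simeq \sqrt k$ normalizing $\int_{-1}^1 \varphi_k(t)\,dt = 1$, so that $\varphi_k(\cdot,0)$ is a polynomial approximate identity on $(-1,1)$ and, on the vertical segments $t = \pm 1$,
$$|\varphi_k(\pm 1 + iy)| + |\varphi_k'(\pm 1 + iy)| \leq (C|y|)^k \qquad \text{for } |y| \leq 1,$$
after absorbing $\sqrt k$ into $C^k$. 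The Cauchy--Riemann relation $\partial_t \varphi_k = -i\partial_y \varphi_k$, combined with two integrations by parts in $t$, the equation $\partial^2_t u = L(u) - a(x)\partial_t u$, and one further integration by parts to handle the $a\partial_t u$ term, yields $\mathcal{P}(v_k) = F_k$, where $F_k$ is a sum of four boundary terms at $t = \pm 1$ whose $L^2(B_1)$-norm is bounded by $CH(C|y|)^k$ for $|y| \leq 1$ thanks to \eqref{limitaz-a-priori}. A separate elementary estimate using $u(x,t) - u(x,0) = \int_0^t \partial_\tau u\,d\tau$ and the $H$-bound on $\partial_t u$ gives
$$\|v_k(\cdot,0) - u(\cdot,0)\|_{L^2(B_1)} \leq C H k^{-\beta}$$
for some $\beta > 0$ independent of $k$, so that up to a polynomial error in $k$ one may replace $u(\cdot,0)$ with $v_k(\cdot,0)$ in the final estimate.

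Next I would apply Theorem \ref{Carleman} to $w = \eta v_k$, where $\eta$ is a smooth cutoff with $\eta \equiv 1$ on the $\varrho$-annulus $\widetilde B^\varrho_s \setminus \widetilde B^\varrho_{2r}$ and supported in $\widetilde B^\varrho_{2s} \setminus \widetilde B^\varrho_r$, with $r$ comparable to $r_0$ and $s$ small enough that $2s \leq 2\sqrt\lambda/C_\ast$. Writing $\mathcal{P}(\eta v_k) = \eta F_k + [\mathcal{P},\eta] v_k$ in the right-hand side of \eqref{6-118} and exploiting \eqref{Psi}, the commutator $[\mathcal{P},\eta] v_k$ is supported in two shells: on the inner one $\{r < \varrho < 2r\}$, a Caccioppoli estimate for $\mathcal{P}$ replaces the $|\nabla v_k|$-contribution by an $L^2$-norm of $v_k$ on a slightly enlarged shell, which by Cauchy--Schwarz in $t$ and \eqref{errore-verticale} is controlled by $\varepsilon$ on a $y$-slab of width comparable to $r_0$; on the outer shell $\{s < \varrho < 2s\}$ the same procedure yields control by the a priori constant $H$ via \eqref{limitaz-a-priori}. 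The $F_k$-term contributes $CH^2 \int \psi^{2-2\tau}(C|y|)^{2k}\,dxdy$, which, after splitting according to whether $|y| \leq \psi/C$ or not, is dominated by $H^2(Cs)^{2k}$ plus a lower-order piece whenever $k$ is taken of order $\tau$. Lower-bounding the left-hand side by its value on the outer portion of the middle region and integrating in a thin $y$-slab about $\{y = 0\}$, one arrives at a three-region inequality schematically of the form
$$\int_{B_{s/2}} |v_k(x,0)|^2\,dx \leq C (s/r)^{2\tau}\varepsilon^2 + C H^2 + C H^2(Cs)^{2(k-\tau)}.$$

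The concluding step is the optimization. Fixing $k = 2\tau$ suppresses the third term, and selecting $\tau \simeq \log((H+e\varepsilon)/\varepsilon)/\log(s/r)$ balances the remaining $(s/r)^{2\tau}\varepsilon^2$ against $H^2$. Combining with the $\delta$-approximation error $H^2 k^{-2\beta}$ from the first paragraph, using $\theta = \log(1/Cs)/\log(1/r_0)$, and taking $\varphi_k$ of sufficient order so that $\beta$ may be chosen arbitrarily close to $\alpha/2$, yields the claimed bound for any $\alpha \in (0, 1/2)$. The main obstacle is precisely the calibration of $k$, $\tau$ and the $y$-slab width: $k$ must be large enough that $(C|y|)^{2k}$ defeats the Carleman weight $\psi^{2-2\tau}$ inside the $F_k$-integral, yet cannot be too large relative to $\tau$ since $k^{-\beta}$ is also the irreducible error of the $\delta$-approximation, and reconciling these two requirements while keeping track of the logarithmic exponent $\alpha$ is the delicate trade-off that produces the final logarithmic decay, analogous in spirit to the parabolic computation of \cite{EsVe}.
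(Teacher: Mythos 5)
Your overall strategy is the same as the paper's: define $v_k(x,y)=\int_{-1}^1 u(x,t)\varphi_k(t+iy)\,dt$ with $\varphi_k(z)=\mu_k(1-z^2)^k$, derive $\mathcal{P}(v_k)=F_k$ with $F_k$ decaying like $(C|y|)^{k-1}$, apply the singular-weight Carleman estimate to a cutoff of $v_k$, and recognize that the Dirac-approximation error $\|v_k(\cdot,0)-u(\cdot,0)\|_{L^2(B_1)}=O(k^{-1/2+o(1)})$, not the Carleman three-ball estimate, is what forces the final logarithmic rate. The derivation of the elliptic equation, the Caccioppoli absorption of commutator gradients, the use of \eqref{errore-verticale} on the inner shell and \eqref{limitaz-a-priori} on the outer shell, and the requirement $k\geq\tau$ to control the $F_k$-integral against the weight $\psi^{2-2\tau}$ all match the paper's Lemmas~\ref{Lemma1}--\ref{Lemma2} and the estimate of $I_1,I_2,I_3$.

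However, the optimization step is wrong, and it is the crux of the proof. You choose $\tau\simeq\log\bigl((H+e\varepsilon)/\varepsilon\bigr)/\log(s/r)$ so that $(s/r)^{2\tau}\varepsilon^2\sim H^2$. But that makes the Carleman part of the bound of size $H^2$ with \emph{no} decay in $\varepsilon$, and adding the approximation error can only make the right-hand side larger; you end up with an estimate $\int_{B_s}|u(\cdot,0)|^2\lesssim H_1^2$, which does not imply \eqref{SUCP}. The paper instead picks $k=\tau$ and then $k_{\ast}\simeq\log(1/\varepsilon_1)/\bigl(2\log(1/r_1)\bigr)$ — a denominator depending only on $r_1\sim r_0$, not on $s/r$. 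With this choice \emph{all} the exponentially growing/decaying terms are strictly small (of order $\varepsilon_1^{2\theta_0}$ and $(C_3 s)^{2k_\ast+1}$, see \eqref{1-136}), and the bottleneck is the polynomial term $k_\ast^{-\alpha}\simeq\bigl(\log(1/r_1)/\log(1/\varepsilon_1)\bigr)^\alpha$, which is precisely the shape of the claimed logarithmic decay. In short: one must pick $k$ small enough that the first term stays below the $\delta$-approximation error, not balance it against $H^2$.

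Two further points of confusion. First, you write that $k$ ``cannot be too large relative to $\tau$ since $k^{-\beta}$ is also the irreducible error''; the monotonicity is backwards — $k^{-\beta}$ \emph{decreases} in $k$, so increasing $k$ helps the approximation; what prevents $k$ from being large is the cumulative $C^k$ growth from $\mu_k$, $4^k$, $5^k$, $C_2^{2k}$ in $\sigma_k$ and $I_1$. Second, the statement ``taking $\varphi_k$ of sufficient order so that $\beta$ may be chosen arbitrarily close to $\alpha/2$'' has no counterpart in the argument: $\varphi_k$ is fixed, and since $u$ is only Lipschitz in $t$ with values in $L^2$ (from \eqref{limitaz-a-priori}), no choice of polynomial kernel of effective width $k^{-1/2}$ gives better than $k^{-1/2}$ approximation, which after squaring and comparing $\log^2 k/k$ with $k^{-\alpha}$ is exactly what forces $\alpha<1/2$. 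There is no freedom to tune $\beta$.
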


\bigskip

In order to prove Proposition \ref{Crucial} we define
\begin{equation}
\label{def-v-k}
 v_k(x,y)=\int^1_{-1}u(x,t)\varphi_k(t+iy)dt, \mbox{ }\quad \forall k\in\mathbb{N},
\end{equation}
where
\begin{equation}
\label{def-phi}
\varphi_k(z)=\mu_k\left(1-z^2\right)^k, \mbox{ }\quad z=t+iy\in\mathbb{C},
\end{equation}
and
\begin{equation}
\label{def-mu}
\mu_k=\left(\int^1_{-1}\left(1-t^2\right)^kdt\right)^{-1},
\end{equation}
so that we have
\begin{equation}
\label{delta-approx}
\int^1_{-1}\varphi_k(t)dt=1, \quad\mbox{ } \forall k\in\mathbb{N}.
\end{equation}
It is easy to check that
\begin{equation}
\label{asimptotic-mu}
\mu_k\simeq \sqrt{\frac{k}{\pi}}, \quad\mbox{ as } k\rightarrow\infty.
\end{equation}

\bigskip

We need some simple lemmas to state the properties of functions $v_k$.

\begin{lemma}\label{Lemma1}
We have
\begin{equation}\label{stima-1-31s}
\left\Vert v_k(\cdot,0)-u(\cdot,0)\right\Vert_{L^2(B_1)}\leq c\frac{\log k}{ \sqrt{k}}, \mbox{ }\quad \forall k\in\mathbb{N},
\end{equation}
where $c$ depends on $n$ only.
\end{lemma}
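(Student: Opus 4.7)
The plan is to rewrite the difference using the normalization \eqref{delta-approx}:
\[
v_k(x,0) - u(x,0) = \int_{-1}^{1}\bigl[u(x,t)-u(x,0)\bigr]\varphi_k(t)\,dt,
\]
and, since $\varphi_k(t)=\mu_k(1-t^2)^k\ge 0$ on $[-1,1]$, apply Cauchy--Schwarz with weight $\varphi_k$ (using once more $\int_{-1}^{1}\varphi_k=1$) to obtain
\[
|v_k(x,0)-u(x,0)|^2\le \int_{-1}^{1}|u(x,t)-u(x,0)|^2\,\varphi_k(t)\,dt.
\]
Integrating over $x\in B_1$ and applying Fubini then reduces the proof to estimating the scalar integral $\int_{-1}^{1}\varphi_k(t)\,f(t)^2\,dt$, where $f(t):=\|u(\cdot,t)-u(\cdot,0)\|_{L^2(B_1)}$.

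For $f(t)$ I would exploit the $\mathcal{W}$-regularity of $u$ in two complementary ways. Since $\partial_t u\in C^0([-1,1];H^1(B_1))$, the fundamental theorem of calculus in the $L^2(B_1)$-valued sense gives
\[
f(t)\le |t|\,\sup_{s\in[-1,1]}\|\partial_t u(\cdot,s)\|_{L^2(B_1)}\le C\,|t|
\]
by \eqref{limitaz-a-priori} (in the normalized setting $\rho_0=T=1$, with the constant absorbing the a priori bound $H$), while the triangle inequality gives the global bound $f(t)\le C$.

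The main step is then to split the weighted integral at $|t|=\delta_k$ for a judicious $\delta_k$. On $\{|t|\le \delta_k\}$ the pointwise Lipschitz bound yields a contribution at most $C\,\delta_k^{2}$. On $\{|t|> \delta_k\}$ I would use the global bound together with a Gaussian-type tail estimate for $\varphi_k$: from $\log(1-s)\le -s$ for $s\in[0,1)$ one has $(1-t^2)^k\le e^{-k t^2}$, and combining this with \eqref{asimptotic-mu} and the elementary bound $\int_{\delta}^{\infty}e^{-k t^2}\,dt\le e^{-k\delta^2}/(2k\delta)$ gives
\[
\int_{|t|>\delta_k}\varphi_k(t)\,dt\le \frac{C}{\delta_k\sqrt{k}}\,e^{-k\delta_k^{2}}.
\]
Choosing $\delta_k=\sqrt{\log k / k}$ makes $e^{-k\delta_k^{2}}=1/k$ and balances the two pieces, yielding
\[
\|v_k(\cdot,0)-u(\cdot,0)\|_{L^2(B_1)}^{2}\le C\,\frac{\log k}{k};
\]
taking square roots and using $\sqrt{\log k}\le \log k$ for $k\ge 3$ gives the claimed estimate, with the small values of $k$ absorbed into the constant.

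The only nontrivial ingredient is the tail bound for $\varphi_k$, which rests on \eqref{asimptotic-mu} and the pointwise inequality $(1-t^2)^k\le e^{-kt^2}$; everything else is Cauchy--Schwarz and the $\mathcal{W}$-regularity of $u$ supplying a Lipschitz modulus of continuity in $t$.
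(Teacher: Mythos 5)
Your proof is correct and follows essentially the same route as the paper: Cauchy--Schwarz with weight $\varphi_k$, the Lipschitz-in-$t$ bound $\|u(\cdot,t)-u(\cdot,0)\|_{L^2(B_1)}\le C|t|$ coming from \eqref{limitaz-a-priori}, and a split of the $t$-integral into a neighborhood of $0$ and its complement. Your tail estimate via $(1-t^2)^k\le e^{-kt^2}$ with the Gaussian-tail integral is slightly sharper than the paper's cruder pointwise bound on $\varphi_k$ off $[-\gamma,\gamma]$ and yields $C\sqrt{\log k}/\sqrt{k}$, which you correctly relax to the stated $C\log k/\sqrt{k}$.
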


\begin{proof}
By \eqref{def-v-k} and \eqref{delta-approx} we have

\begin{equation*}
 v_k(x,0)-u(x,0)=\int^1_{-1}\left(u(x,t)-u(x,0)\right)\varphi_k(t)dt, \quad\mbox{ } \forall x\in B_1,
 \end{equation*}
hence, by Schwarz inequality and integrating over $B_1$ we have

\begin{gather}\label{pag-29s-1}
 \int_{B_1}\left\vert v_k(x,0)-u(x,0)\right\vert^2dx\leq \int_{B_1}dx\int^1_{-1}\left\vert u(x,t)-u(x,0)\right\vert^2\varphi_k(t)dt=\\ \nonumber
 =\int_{[-\gamma,\gamma]}\varphi_k(t)dt\int_{B_1}\left\vert u(x,t)-u(x,0)\right\vert^2dx+\int_{[-1,1]\setminus [-\gamma,\gamma]}\varphi_k(t)dt\int_{B_1}\left\vert u(x,t)-u(x,0)\right\vert^2dx,
\end{gather}
where $\gamma\in (0,1)$ is a number that we will choose. Now we have
\begin{equation*}
\varphi_k(t)\leq \mu_k(1-\gamma^2)^{k/2}, \quad\mbox{ } \forall t\in[-1,1]\setminus [-\gamma,\gamma]
\end{equation*}
and, by \eqref{limitaz-a-priori},

\begin{equation*}
\int_{B_1}\left\vert u(x,t)-u(x,0)\right\vert^2dx\leq t^2H^2 .
\end{equation*}
Hence, by \eqref{limitaz-a-priori}, \eqref{asimptotic-mu} and \eqref{pag-29s-1}, we have

\begin{gather}\label{pag-30s-1}
 \left\Vert v_k(\cdot,0)-u(\cdot,0)\right\Vert_{L^2(B_1)}\leq c H \left(\gamma+k^{1/4}(1-\gamma^2)^{k/2}\right), \quad\mbox{for every } \gamma\in (0,1),
\end{gather}
where $c$ depends on $n$ only. Now, we choose $\gamma=k^{-1/2}\log k$ and we get \eqref{stima-1-31s}.
\end{proof}

\begin{lemma}\label{Lemma2}
Let $u$ be a solution to \eqref{4i-65} and let \eqref{1-65} and \eqref{bound-b} be satisfied, then $v_k\in H^2\left(B_1\times(-1,1)\right)$ is  a solution to the equation

\begin{equation}\label{equ-for-v}
\partial^2_{y}v_k-ia(x)\partial_{y}v_k+L(v_k)=F_k(x,y), \quad \hbox{in } B_{1}\times\mathbb{R},
\end{equation}
where $F_k\in L^{\infty}(-1,1;L^2(B_1))$ and it satisfies
\begin{equation}
\label{bound-for-F-k}
\left\Vert F_k(\cdot,y)\right\Vert_{L^2(B_1)}\leq CH k\mu_k |\sqrt{5}y|^{k-1}, \quad\mbox{ } \forall y\in[-1,1],
\end{equation}
$C$ depending on $\Lambda_1$ only.

In addition, $v_k$ satisfies the following properties
\begin{equation}\label{bound-for-v-k-c}
\sup_{y\in[-1,1]}\left\Vert v_k(\cdot,y)\right\Vert_{L^2(B_1)}\leq 2^k\mu_k H,
\end{equation}
\begin{equation}\label{bound-for-v-k-caccioppoli-2}
\int_{\widetilde{B}_{r_0/2}} \left(\left\vert v_k\right\vert^2+r_0^2\left\vert \nabla_{x,y} v_k\right\vert^2\right) dxdy\leq C\left(r_04^kk\varepsilon^2+H^2k^3\left(\sqrt{5}r_0\right)^{2(k+2)}\right),
\end{equation}
where $C$ depends on $\lambda$ and $\Lambda_1$ only.
\end{lemma}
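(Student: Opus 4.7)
My plan is to establish the four assertions in the order stated, with the derivation of the elliptic equation \eqref{equ-for-v} being the crucial calculation and the three estimates following from pointwise bounds on $\varphi_k$ and $\partial_t\varphi_k$ combined with the hypotheses \eqref{limitaz-a-priori} and \eqref{errore-verticale}, plus a standard Caccioppoli argument for the gradient.

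The key observation is that $\varphi_k(z)=\mu_k(1-z^2)^k$ is an entire function of $z=t+iy$, so the Cauchy--Riemann relations yield $\partial_y\varphi_k=i\,\partial_t\varphi_k$ and $\partial_y^2\varphi_k=-\partial_t^2\varphi_k$. Differentiating \eqref{def-v-k} under the integral,
\[
\partial_y v_k=i\!\int_{-1}^{1}\!u\,\partial_t\varphi_k\,dt,\qquad \partial_y^2 v_k=-\!\int_{-1}^{1}\!u\,\partial_t^2\varphi_k\,dt.
\]
Two integrations by parts in $t$ applied to the second identity (legitimate because $u\in\mathcal{W}([-1,1];B_1)$ has traces $u(\cdot,\pm 1),\partial_t u(\cdot,\pm 1)\in L^2(B_1)$ and $\partial_t^2u\in L^2$) give
\[
\partial_y^2 v_k=\big[\partial_t u\,\varphi_k-u\,\partial_t\varphi_k\big]_{t=-1}^{t=1}-\int_{-1}^{1}\partial_t^2 u\,\varphi_k\,dt.
\]
Substituting $\partial_t^2 u=L(u)-a(x)\partial_t u$ from \eqref{4i-65} and pulling $L$ (which acts only in $x$) outside the integral turns the last integral into $-L(v_k)+a(x)\int\partial_t u\,\varphi_k\,dt$; a single integration by parts in the first identity gives $\int\partial_t u\,\varphi_k\,dt=[u\,\varphi_k]_{-1}^1+i\,\partial_y v_k$. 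Inserting this back produces \eqref{equ-for-v} with
\[
F_k(x,y)=\big[\partial_t u\,\varphi_k-u\,\partial_t\varphi_k+a(x)\,u\,\varphi_k\big]_{t=-1}^{t=1}.
\]

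To derive the three estimates, the explicit computation $1-(\pm 1+iy)^2=y(y\mp 2i)$ gives $|1-(\pm 1+iy)^2|\leq\sqrt{5}\,|y|$ for $|y|\leq 1$, and combined with $\partial_t\varphi_k(z)=-2k\mu_k z(1-z^2)^{k-1}$ and $|\pm 1+iy|\leq\sqrt{2}$ yields
\[
|\varphi_k(\pm 1+iy)|\leq\mu_k(\sqrt{5}\,|y|)^{k},\qquad |\partial_t\varphi_k(\pm 1+iy)|\leq 2\sqrt{2}\,k\mu_k(\sqrt{5}\,|y|)^{k-1}.
\]
Taking $L^2(B_1)$ norms of the three boundary terms forming $F_k$, using $\|u(\cdot,\pm 1)\|_{L^2(B_1)},\|\partial_t u(\cdot,\pm 1)\|_{L^2(B_1)}\leq H$ from \eqref{limitaz-a-priori} and $\|a\|_{L^\infty}\leq\Lambda_1$ from \eqref{bound-b}, and absorbing the harmless factor $\sqrt{5}|y|\leq\sqrt{5}$ in the $u\,\varphi_k$ term, delivers \eqref{bound-for-F-k}. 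The bound \eqref{bound-for-v-k-c} follows analogously by Minkowski's inequality applied to \eqref{def-v-k}, using the cruder pointwise bound $|\varphi_k(t+iy)|\leq C^k\mu_k$ on $[-1,1]^2$ (with $C$ an absolute constant) together with $\|u(\cdot,t)\|_{L^2(B_1)}\leq H$. Finally, \eqref{bound-for-v-k-caccioppoli-2} is obtained by a standard Caccioppoli estimate for the elliptic equation \eqref{equ-for-v}: testing against $\chi^2\overline{v_k}$ with a cutoff $\chi\in C_c^\infty(\widetilde B_{r_0})$ equal to $1$ on $\widetilde B_{r_0/2}$ with $|\nabla_{x,y}\chi|\leq C/r_0$, integrating by parts, and using ellipticity \eqref{1-65a} together with Young's inequality (to absorb the cross terms and the complex first-order coefficient $-ia\,\partial_y$) yields
\[
\int_{\widetilde B_{r_0/2}}\!\!\big(|v_k|^2+r_0^2|\nabla_{x,y}v_k|^2\big)\,dxdy\leq C\!\!\int_{\widetilde B_{r_0}}\!\!|v_k|^2\,dxdy+Cr_0^4\!\!\int_{\widetilde B_{r_0}}\!\!|F_k|^2\,dxdy.
\]
The first integral on the right is controlled, by Cauchy--Schwarz in $t$ inside \eqref{def-v-k} and the pointwise bound $\int_{-1}^{1}|\varphi_k(t+iy)|^2\,dt\lesssim\mu_k^2\,4^k$ for $|y|\leq r_0$, by $Cr_0\,\mu_k^2\,4^k\,\varepsilon^2$ (invoking \eqref{errore-verticale}); the second is controlled by integrating the square of \eqref{bound-for-F-k} over $|y|\leq r_0$, producing a term of order $H^2 k^3(\sqrt{5}r_0)^{2(k+2)}$. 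Putting these contributions together yields \eqref{bound-for-v-k-caccioppoli-2}.

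I expect the main technical difficulty to lie in the careful bookkeeping of boundary terms at $t=\pm 1$ during the derivation of \eqref{equ-for-v}: one must simultaneously juggle two integrations by parts in the second $t$-derivative, the substitution from the hyperbolic equation, and a further integration by parts needed to convert $\int a\,\partial_t u\,\varphi_k$ into $ia(x)\partial_y v_k$ plus residual boundary contributions, all against a $u$ of only the minimal regularity $\mathcal{W}([-1,1];B_1)$. Once \eqref{equ-for-v} is in hand, the pointwise estimates are routine, and the Caccioppoli estimate is standard, the only additional subtlety being the presence of the complex first-order coefficient $-ia\,\partial_y$, which is handled by Young's inequality.
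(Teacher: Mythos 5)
Your proposal matches the paper's proof in all essentials: the elliptic equation and the explicit formula for $F_k$ are derived by the same sequence of differentiations under the integral (via the Cauchy--Riemann relation $\partial_y\varphi_k=i\partial_t\varphi_k$), integrations by parts, and substitution from the hyperbolic equation, and the three estimates follow from the same pointwise bounds on $\varphi_k,\varphi_k'$ at $t=\pm 1$ and on $[-1,1]^2$, together with the identical Caccioppoli argument. The only cosmetic difference is your cruder constant $C^k$ in place of the paper's explicit $2^k$ in \eqref{bound-for-v-k-c}; the method and structure are the same.
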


\begin{proof}
The fact that $v_k$ belongs to $H^2\left(B_1\times(-1,1)\right)$ is an immediate consequence of differentiation under the integral sign. Actually we have

\begin{gather}\label{derivate di u-risp x}
\partial_y^m D^j_xv_k(x,y)=\int^1_{-1}D^j_x u(x,t)\partial_y^m\left(\varphi_k(t+iy)\right)dt, \quad\mbox{ for } j,m=0,1,2,
\end{gather}
hence by Schwarz inequality and taking into account that $u\in\mathcal{W}\left([-1,1];B_1\right)$ we have $v_k\in H^2\left(B_1\times(-1,1)\right)$.

Now we prove \eqref{equ-for-v}.

By integration by parts and taking into account that

\begin{equation*}
\partial_t \varphi_k(t+iy)=\frac{1}{i}\partial_y \varphi_k(t+iy),
\end{equation*}
we have
\begin{gather*}
\int^1_{-1}\partial_t u(x,t)\varphi_k(t+iy)dt=\nonumber\\
=\left.(u(x,t)\varphi_k(t+iy))\right \vert_{t=-1}^{t=1}-\int^1_{-1}u(x,t)\partial_t \varphi_k(t+iy)dt=\nonumber\\
=\left.(u(x,t)\varphi_k(t+iy))\right \vert_{t=-1}^{t=1}-\frac{1}{i}\int^1_{-1}u(x,t)\partial_y \varphi_k(t+iy)dt=\nonumber\\
=\left.(u(x,t)\varphi_k(t+iy))\right \vert_{t=-1}^{t=1}+i\partial_yv_k(x,y).
\end{gather*}
Hence we have
\begin{gather}\label{derivata-prima-u}
-i\partial_yv_k(x,y)=-\int^1_{-1}\partial_t u(x,t)\varphi_k(t+iy)dt+\left.(u(x,t)\varphi_k(t+iy))\right\vert_{t=-1}^{t=1}.
\end{gather}
Similarly we have

\begin{gather}
\partial^2_yv_k(x,y)=-\int^1_{-1}\partial^2_tu(x,t)\varphi_k(t+iy)dt+\nonumber\\
+\left. (\partial_tu(x,t)\varphi_k(t+iy))\right\vert_{t=-1}^{t=1}-\left. (u(x,t)\varphi'_k(t+iy))\right\vert_{t=-1}^{t=1}.
\label{s}
\end{gather}

\medskip

Now, by \eqref{4i-65}, \eqref{derivate di u-risp x}, \eqref{derivata-prima-u} and \eqref{s} we have
\begin{gather}
\partial^2_{y}v_k-ia(x)\partial_{y}v_k+L(v_k)=\nonumber\\
=-\int^1_{-1}\left\{\partial^2_tu(x,t)+a(x)\partial_tu(x,t)-L(u)(x,t)\right\}\varphi_k(t+iy)dt+F_k(x,y)=F_k(x,y),
\label{2-4s}
\end{gather}
where

\begin{gather}\label{def-F-k}
F_k(x,y)=\varphi_k(1+iy)\left(\vphantom{1^2}a(x)u(x,1)+\partial_t u(x,1)\right)-\varphi'_k(1+iy)u(x,1)-\nonumber\\
-\left[\vphantom{1^2}\varphi_k(-1+iy)\left(a(x)u(x,-1)+\partial_t u(x,-1)\right)-\varphi'_k(-1+iy)u(x,-1)\right]
\end{gather}
and \eqref{equ-for-v} is proved.

Now we prove \eqref{bound-for-F-k}.

It is easy to check that, for every $k\in\mathbb{N}$ we have
\begin{subequations}\label{bound-for-varphi-k}
\begin{equation}\label{bound-for-varphi-k-a}
\left\vert \varphi_k(\pm 1+iy)\right\vert=\mu_k\left(4y^2+y^4\right)^{k/2}, \quad\mbox{ } \forall y\in \mathbb{R},
\end{equation}
\begin{equation}\label{bound-for-varphi-k-b}
\left\vert \varphi'_k(\pm 1+iy)\right\vert=2k\mu_k\left(1+y^2\right)^{1/2}\left(4y^2+y^4\right)^{(k-1)/2}, \quad\mbox{ } \forall y\in \mathbb{R}.
\end{equation}
\end{subequations}
In addition, since
\begin{equation*}
\left\vert \varphi_k(t+iy)\right\vert=\mu_k\left[t^4-2t^2(1-y^2)+(1+y^2)^2\right]^{\frac{k}{2}},
\end{equation*}
we have
\begin{equation}\label{bound-for-varphi-k-c}
\left\vert \varphi_k(t+iy)\right\vert\leq2^k\mu_k, \quad\mbox{ } \forall (t,y)\in[-1,1]\times[-1,1].
\end{equation}

By \eqref{bound-b}, \eqref{limitaz-a-priori}, \eqref{bound-for-varphi-k-a} and \eqref{bound-for-varphi-k-b} we have \eqref{bound-for-F-k}.

\medskip

By Schwarz inequality and \eqref{bound-for-varphi-k-c} we have, for any $R\in(0,1]$,

\begin{equation}\label{bound-for-v-k-R}
\sup_{y\in[-1,1]}\left\Vert v_k(\cdot,y)\right\Vert_{L^2(B_{R})}\leq 2^k\mu_k\left(\int_{-1}^{1}\int_{B_{R}}u^2(x,t)dxdt\right)^{1/2},
\end{equation}
hence, for $R=1$, taking into account \eqref{limitaz-a-priori}, we obtain \eqref{bound-for-v-k-c}.

Finally, let us prove \eqref{bound-for-v-k-caccioppoli-2}. To this purpose we firstly observe that applying \eqref{bound-for-v-k-R} for $R=r_0$ and taking into account \eqref{errore-verticale} we have
\begin{equation}\label{bound-for-v-k-Nuovo}
\sup_{y\in[-1,1]}\left\Vert v_k(\cdot,y)\right\Vert_{L^2(B_{r_0})}\leq C 2^{k}\mu_k\varepsilon.
\end{equation}
Afterwards, since $v_k$ is solution to elliptic equation \eqref{equ-for-v}, the following Caccioppoli's inequality, \cite{caccioppoli}, \cite{Gi}, holds
\begin{equation}\label{bound-for-v-k-caccioppoli-1}
\int_{\widetilde{B}_{r_0/2}} \left\vert \nabla_{x,y} v_k\right\vert^2dxdy\leq Cr_0^{-2}\int_{\widetilde{B}_{r_0}} \left(\left\vert v_k\right\vert^2+r_0^{4}\left\vert F_k\right\vert^2\right) dxdy,
\end{equation}
where $C$ depends on $\lambda$ only. Finally, by \eqref{asimptotic-mu}, \eqref{bound-for-F-k}, \eqref{bound-for-v-k-Nuovo} and \eqref{bound-for-v-k-caccioppoli-1} we get \eqref{bound-for-v-k-caccioppoli-2}.
\end{proof}

\bigskip

\textit{Proof of Proposition \ref{Crucial}}

Set $$r_1=\frac{\sqrt{\lambda} r_0}{8}.$$ By \eqref{bound-for-v-k-caccioppoli-2} we have

\begin{equation}
\label{2-120}
\int_{\widetilde{B}^{\varrho}_{4r_1}} \left(\left\vert v_k\right\vert^2+r_1^2\left\vert \nabla_{x,y} v_k\right\vert^2\right) dxdy\leq C r_1\sigma_k(\varepsilon, r_1),
\end{equation}
where $C$ depends on $\lambda$ and $\Lambda_1$ only and
\begin{equation}
\label{3-120}
 \sigma_k(\varepsilon, r_1)=4^kk\varepsilon^2+H^2k^3\left(C_1r_1\right)^{2k+2},
\end{equation}
where $C_1=2\sqrt{5}\lambda^{-1/2}$.

Now we apply Theorem \ref{Carleman}.

Denote
\[\psi_0(r):=\Psi(r/2\sqrt{\lambda})\quad\hbox{, for every } r>0\]
and $$R=\frac{\sqrt{\lambda}}{2C_{\ast}}.$$

Let us define
\begin{equation*}
\zeta(x,y)=h\left(\psi(x,y)\right).
\end{equation*}
where $h$ belongs to $C^2_0\left(0, \psi_0\left(2R\right)\right)$ and satisfies

\begin{subequations}
\begin{equation*}
0\leq h\leq 1,
\end{equation*}
\begin{equation*}
h(r)=1 ,\quad\hbox{ } \forall r\in\left[\psi_0\left(2r_1\right), \psi_0\left(R\right)\right],
\end{equation*}
\begin{equation*}
h(r)=0, \quad\hbox{ } \forall r\in\left[0,\psi_0\left(r_1\right)\right]\cup \left[\psi_0\left(3R/2\right), \psi_0\left(2R\right)\right],
\end{equation*}
\begin{equation*}
r_1\left\vert h'(r)\right\vert+r_1^2\left\vert h''(r)\right\vert\leq c, \quad\hbox{ } \forall r\in\left[\psi_0\left(r_1\right), \psi_0\left(2r_1\right)\right],
\end{equation*}
\begin{equation*}
\left\vert h'(r)\right\vert +\left\vert h''(r)\right\vert\leq c, \quad\hbox{ } \forall r\in\left[\psi_0\left(R\right), \psi_0\left(3R/2\right)\right],
\end{equation*}
\end{subequations}
where $c$ depends on $\lambda$ and $\Lambda$ only. Notice that if $2r_1\leq \varrho(x,y)\leq R$ then $\zeta(x,y)=1$ and if $\varrho(x,y)\geq 2R$ or $\varrho(x,y)\leq r_1$ then $\zeta(x,y)=0$.

By density, we can apply \eqref{6-118} to the function $w=\zeta v_k$ and we have, for every $\tau\geq \tau_0$,

\begin{equation}
\label{1-121}
\int_{\widetilde{B}^{\varrho}_{2R}}\left(\tau\psi^{1-2\tau}\left\vert\nabla_{x,y}\left(\zeta v_k\right)\right\vert^2+
\tau^3\psi^{-1-2\tau}\left\vert\zeta v_k\right\vert^2\right)dxdy
\leq  C\left(I_1+I_2+I_2\right),
\end{equation}
where $C$ depends on $\lambda$, $\Lambda$ and $\Lambda_1$ only and
\begin{subequations}\label{def-di-I}
\begin{equation}\label{def-di-I-a}
I_1=\int_{\widetilde{B}^{\varrho}_{2R}}\psi^{2-2\tau} \left\vert F_k\right\vert^2 \zeta^2dxdy,
\end{equation}
\begin{equation}\label{def-di-I-b}
I_2=\int_{\widetilde{B}^{\varrho}_{2R}}\psi^{2-2\tau} \left\vert \mathcal{P}(\zeta)\right\vert^2 \left\vert v_k \right\vert^2dxdy,
\end{equation}
\begin{equation}\label{def-di-I-c}
I_3=\int_{\widetilde{B}^{\varrho}_{2R}}\psi^{2-2\tau} \left\vert\nabla_{x,y} v_k\right\vert^2\left\vert\nabla_{x,y}\zeta\right\vert^2dxdy.
\end{equation}
\end{subequations}

\bigskip

\textbf{Estimate of $I_1$}.

By \eqref{Psi} we have
\begin{equation}
\label{2-122}
C_2^{-1}\leq\left(|x|^2+y^2\right)^{-1/2}\psi(x,y)\leq C_2, \quad \mbox{ } \forall (x,y)\in\widetilde{B}_1,
\end{equation}
where $C_2>2$ depends on $\lambda$ and $\Lambda$ only.

By \eqref{bound-for-F-k}, \eqref{def-di-I-a} and \eqref{2-122} we have

\begin{gather}
\label{I-1}
I_1=\int_{\widetilde{B}^{\varrho}_{2R}}\psi^{2-2\tau} \left\vert F_k\right\vert^2 \zeta^2dxdy\leq C_2^{2(\tau-1)}\int_{\widetilde{B}^{\varrho}_{2R}}\left(|x|^2+y^2\right)^{1-\tau}
\left\vert F_k\right\vert^2 dxdy\leq\\ \nonumber
\leq CH^2k^35^kC_2^{2(\tau-1)}\int_{\widetilde{B}^{\varrho}_{2R}}\left(|x|^2+y^2\right)^{k-\tau}dxdy,
\end{gather}
where $C$ depends on $\lambda$ and $\Lambda$ only.

Now let $k$ and $\tau$ satisfy

\begin{equation}
\label{1-123}
k\geq \tau\geq \tau_0.
\end{equation}
By \eqref{I-1} and \eqref{1-123} we have

\begin{equation}
\label{2-123}
I_1\leq C H^2 k^3 5^kC_2^{2(k-1)}.
\end{equation}

\bigskip

\textbf{Estimate of $I_2$}.

By \eqref{bound-for-v-k-c}, \eqref{2-120} and \eqref{def-di-I-b}  we have
\begin{gather*}
I_2 \leq Cr_1^{-4}\int_{\widetilde{B}^{\varrho}_{2r_1}\setminus\widetilde{B}^{\varrho}_{r_1}}\psi^{2-2\tau} \left\vert v_k\right\vert^2 dxdy
+C\int_{\widetilde{B}^{\varrho}_{3R/2}\setminus\widetilde{B}^{\varrho}_{R}}\psi^{2-2\tau}\left\vert v_k\right\vert^2 dxdy \leq\\
\leq C\left(r_1^{-3}\psi^{2-2\tau}_0(r_1)\sigma_k(\varepsilon, r_1)+H^2k4^k\psi^{2-2\tau}_0(R)\right),
\end{gather*}
hence, by \eqref{2-122} we have

\begin{equation}\label{1-125}
I_2 \leq C\left(\psi^{-1-2\tau}_0(r_1)\sigma_k(\varepsilon, r_1)+H^2k4^k\psi^{1-2\tau}_0(R)\right),
\end{equation}
where $C$ depends on $\lambda$ and $\Lambda$ only.

\bigskip

\textbf{Estimate of $I_3$.}

By \eqref{def-di-I-c}  we have

\begin{gather}\label{2-125}
I_3 \leq Cr_1^{-2}\psi^{2-2\tau}_0(r_1)\int_{\widetilde{B}^{\varrho}_{2r_1}\setminus\widetilde{B}^{\varrho}_{r_1}} \left\vert\nabla_{x,y} v_k\right\vert^2 dxdy+\\ \nonumber+C\psi^{2-2\tau}_0(R)\int_{\widetilde{B}^{\varrho}_{3R/2}\setminus\widetilde{B}^{\varrho}_{R}} \left\vert\nabla_{x,y} v_k\right\vert^2 dxdy.
\end{gather}
Now in order to estimate from above the righthand side of \eqref{2-125} we use the Caccioppoli inequality, \eqref{bound-for-F-k}, \eqref{bound-for-v-k-c} and \eqref{2-120} and we get
\begin{gather}
\label{1-127}
I_3 \leq Cr_1^{-2}\psi^{2-2\tau}_0(r_1)\left(r_1^{-2}\int_{\widetilde{B}^{\varrho}_{4r_1}\setminus\widetilde{B}^{\varrho}_{r_1/2}} \left\vert v_k\right\vert^2 dxdy+r_1^2\int_{\widetilde{B}^{\varrho}_{4r_1}\setminus\widetilde{B}^{\varrho}_{r_1/2}} \left\vert F_k\right\vert^2 dxdy\right)+\\ \nonumber+C\psi^{2-2\tau}_0(R)\left(R^{-2}\int_{\widetilde{B}^{\varrho}_{2R}\setminus\widetilde{B}^{\varrho}_{R/2}} \left\vert v_k\right\vert^2 dxdy +R^2\int_{\widetilde{B}^{\varrho}_{2R}\setminus\widetilde{B}^{\varrho}_{R/2}} \left\vert F_k\right\vert^2 dxdy \right)\leq\\ \nonumber \leq C \sigma_k(\varepsilon,r_1)
\psi^{-1-2\tau}_0(r_1)+CH^25^{k}k^3\psi^{1-2\tau}_0(R):=\widetilde{I}_3,
\end{gather}
where $C$ depends on $\lambda$, $\Lambda$ and $\Lambda_1$ only.

\bigskip

Let $r_1\leq \frac{R}{2}$ and let $s$ be such that $\frac{2r_1}{\sqrt{\lambda}}\leq
s\leq\frac{R}{\sqrt{\lambda}}$. Denote $$\widetilde{s}=\sqrt{\lambda}s.$$ By estimating from below trivially the left hand side of \eqref{1-121} and taking into account \eqref{1-127} we get

\begin{equation}
\label{2-127}
\psi^{-1-2\tau}_0(\widetilde{s})\int_{\widetilde{B}^{\varrho}_{\widetilde{s}}\setminus\widetilde{B}^{\varrho}_{2r_1}}\left\vert v_k\right\vert^2+\psi^{1-2\tau}_0(\widetilde{s})\int_{\widetilde{B}^{\varrho}_{\widetilde{s}}\setminus\widetilde{B}^{\varrho}_{2r_1}}\left\vert\nabla_{x,y} v_k\right\vert^2
\leq C\left(I_1+I_2+\widetilde{I}_3\right),
\end{equation}
where $C$ depends on $\lambda$, $\Lambda$ and $\Lambda_1$ only.

Now, by \eqref{5-119}, \eqref{2-120} and into account that $\psi_0(\widetilde{s})\geq\psi_0(r_1)$ we have
\begin{gather}
\psi^{-1-2\tau}_0(\widetilde{s})\int_{\widetilde{B}^{\varrho}_{2r_1}} \left\vert v_k\right\vert^2+\psi^{1-2\tau}_0(\widetilde{s})\int_{\widetilde{B}^{\varrho}_{2r_1}}\left\vert \nabla_{x,y} v_k\right\vert^2 dxdy\leq \nonumber\\
\leq C\psi^{-1-2\tau}_0(\widetilde{s})\int_{\widetilde{B}^{\varrho}_{2r_1}} \left(\left\vert v_k\right\vert^2+r_1^2\left\vert \nabla_{x,y} v_k\right\vert^2 \right)dxdy\leq C r_1\sigma_k(\varepsilon, r_1)\psi^{-1-2\tau}_0(r_1).
\label{aggiustamento}
\end{gather}

Now let us add at both the side of \eqref{2-127} the quantity

$$\psi^{-1-2\tau}_0(\widetilde{s})\int_{\widetilde{B}^{\varrho}_{2r_1}} \left\vert v_k\right\vert^2+\psi^{1-2\tau}_0(\widetilde{s})\int_{\widetilde{B}^{\varrho}_{2r_1}}\left\vert \nabla_{x,y} v_k\right\vert^2 dxdy$$ and by \eqref{aggiustamento} we have
\begin{equation}
\label{2-127-nuovo}
\psi^{-1-2\tau}_0(\widetilde{s})\int_{\widetilde{B}^{\varrho}_{\widetilde{s}}}\left\vert v_k\right\vert^2+\psi^{1-2\tau}_0(\widetilde{s})\int_{\widetilde{B}^{\varrho}_{\widetilde{s}}}\left\vert\nabla_{x,y} v_k\right\vert^2
\leq C\left(I_1+I_2+\widetilde{I}_3\right),
\end{equation}
where $C$ depends on $\lambda$, $\Lambda$ and $\Lambda_1$ only. Moreover, by \eqref{2-123}, \eqref{1-125} and \eqref{1-127} we have

\begin{equation}\label{1934}
I_1+I_2+\widetilde{I}_3\leq C \sigma_k(\varepsilon,r_1)
\psi^{-1-2\tau}_0(r_1)+CH^2k^3 5^kC_2^{2k}\psi^{1-2\tau}_0(R).
\end{equation}

Now by \eqref{2-122}, \eqref{2-123}, \eqref{1-125}, \eqref{1-127} and \eqref{1934} we have that, if \eqref{1-123} is satisfied then
\begin{equation}
\label{3-130}
\int_{\widetilde{B}_{\lambda s}} \left\vert v_k\right\vert^2+s^2\int_{\widetilde{B}_{\lambda s}}\left\vert\nabla_{x,y} v_k\right\vert^2
 \leq C\omega_{k,\tau}\quad,
\end{equation}
where $C$ depends on $\lambda$, $\Lambda$ and $\Lambda_1$ only and

\begin{equation}\label{smallomega}
 \omega_{k,\tau}(\varepsilon,r_1)=\sigma_k(\varepsilon,r_1)\left(\frac{\psi_0(\widetilde{s})}{\psi_0(r_1)}\right)^{1+2\tau}
+H^2k^3 5^kC_2^{2k}\left(\frac{\psi_0(\widetilde{s})}{\psi_0(R)}\right)^{1+2\tau}.
\end{equation}

By a standard trace inequality we have

\begin{equation}
\label{1-131}
s\int_{B_{\lambda s/2}} \left\vert v_k(\cdot,0) \right\vert^2 \leq C\omega_{k,\tau}(\varepsilon,r_1)
\end{equation}
and Lemma \eqref{Lemma1} implies

\begin{equation}
\label{1-132}
s\int_{B_{\lambda s/2}} \left\vert u(\cdot,0) \right\vert^2\leq C\left(\frac{\log k}{ \sqrt{k}}+\omega_{k,\tau}(\varepsilon,r_1)\right) ,
\end{equation}
where $C$ depend on $\lambda$, $\Lambda$ and $\Lambda_1$ only.

Now, we choose $k=\tau$ in \eqref{1-132} and using trivial inequality we have that, for any $0<\alpha<\frac{1}{2}$ there exist constants $C_3>1$ and $k_0$ depending on $\lambda$, $\Lambda$, $\Lambda_1$ and $\alpha$ only such that for every $k\geq k_0$ we have
\begin{equation}
\label{1-135}
s\int_{B_{\lambda s/2}} \left\vert u(\cdot,0) \right\vert^2\leq C_3 H_1^2\left[\left(C_3s r_1^{-1}\right)^{ 2k+1}\varepsilon_1^2+\left(C_3s\right)^{2k+1}+k^{-\alpha}\right],
\end{equation}
where
\begin{equation*}
H_1:=H+e\varepsilon \quad \mbox{ and }\quad  \varepsilon_1:=\frac{\varepsilon}{H+e\varepsilon}.
\end{equation*}
Let us denote
\[k_{\ast}= \min\left\{p\in\mathbb{Z}:p \geq \frac{\log \varepsilon_1}{2\log r_1}\right\}.\]
If $k_{\ast}\geq k_0$ then we choose $k=k_{\ast}$ and by \eqref{1-135} we have, for
$$s\leq 1/C_3,$$
\begin{equation}
\label{1-136}
s\int_{B_{\lambda s/2}} \left\vert u(\cdot,0) \right\vert^2\leq 2C_3 H_1^2\left(\varepsilon_1^{2\theta_0}+\left(\frac{2\log (1/r_1)}{\log (1/\varepsilon_1)}\right)^{\alpha}\right),
\end{equation}
where
\begin{equation}
\label{2-136}
\theta_0=\frac{\log (1/C_3s)}{2\log (1/r_1)}.
\end{equation}
Otherwise, if $k_{\ast} < k_0$ then $\frac{\log \varepsilon_1}{2\log r_1}<k_0,$ hence
$$\theta_0 \log (1/\varepsilon_1)=\log (1/C_3s)\frac{\log \varepsilon_1}{2\log r_1}<k_0 \log (1/C_3s).$$
This implies
\[(C_3s)^{-2k_0}\varepsilon_1^{2\theta_0}\geq 1,\]
that, in turns,  taking into account \eqref{limitaz-a-priori}, gives trivially
\begin{gather}
\label{2-137}
\int_{B_{\lambda s/2}} \left\vert u(\cdot,0) \right\vert^2\leq  H^2\leq\\ \nonumber
\leq (C_3s)^{-2k_0}\varepsilon_1^{2\theta_0} H^2\leq (C_3s)^{-2 k_0} (H+e\varepsilon)^{2(1-\theta_0)}\varepsilon^{2\theta_0}.
\end{gather}
Finally, by \eqref{1-136} and \eqref{2-137} we obtain \eqref{SUCP}. $\Box$

\bigskip

\textit{Conclusion of the proof of Theorem \ref{5-115}.}

Let $t_0\in(-T,T)$. It is not restrictive to assume $t_0\geq 0$. Denote
\begin{equation*}
\label{rho-h}
\rho(t_0)=\left(1-T^{-1}t_0\right)\rho_0,   \quad \mbox{ } T(t_0)=\left(1-T^{-1}t_0\right)T
\end{equation*}
and
\begin{equation*}
\label{u-tilde}
U(y,\eta)=u\left(\rho(t_0)y,\eta T(t_0)+t_0\right), \quad \mbox{ for} (y,\eta)\in B_1\times(-1,1).
\end{equation*}
It is easy to check that $\widetilde{u}$ is a solution to

\begin{equation*}
\label{new-equation}
\partial^2_{\eta}U+\widetilde{a}(y)\partial^2_{\eta}U-\mathcal{L}U=0, \quad \mbox{ for} (y,\eta)\in B_1\times(-1,1),
\end{equation*}
where
\begin{subequations}
\label{notation}
\begin{equation*}
\label{notation-a}
\mathcal{L}U=\dive_y\left(\widetilde{A}(y)\nabla_yU\right)+\widetilde{b}(y)\cdot\nabla_yU+\widetilde{c}(y)U,
\end{equation*}
\begin{equation*}
\label{notation-b}
\widetilde{A}(y)=\left(T\rho^{-1}_0\right)^2A\left(\rho(t_0)y\right),\quad \widetilde{a}(y)=(T-t_0)a\left(\rho(t_0)y\right)
\end{equation*}
\begin{equation*}
\label{notation-b}
\widetilde{b}(y)=\left(T(T-t_0)\rho^{-1}_0\right)b\left(\rho(t_0)y\right), \quad \widetilde{c}(y)=\left(T-t_0\right)^2c\left(\rho(t_0)y\right).
\end{equation*}
\end{subequations}
By \eqref{1-65a} and \eqref{2-65} we have respectively
\begin{subequations}
\label{1-65-new}
\begin{equation*}
\label{1-65a-new}
\lambda_0\left\vert\xi\right\vert^2\leq \widetilde{A}(y)\xi\cdot\xi\leq\lambda_0^{-1}\left\vert\xi\right\vert^2, \quad \hbox{for every } x, \xi\in\mathbb{R}^n,
\end{equation*}
\begin{equation*}
\label{2-65-new}
\left\vert \widetilde{A}(y_{\ast})-A(y)\right\vert\leq\frac{\Lambda_0}{\rho_0} \left\vert y_{\ast}-y \right\vert, \quad \hbox{for every } y_{\ast}, y\in\mathbb{R}^n,
\end{equation*}
\end{subequations}
where
$$\lambda_0=\lambda\min\{\left(T\rho^{-1}_0\right)^2,\left(T\rho^{-1}_0\right)^{-2}\} ,\mbox{ and } \Lambda_0=T^2\rho^{-1}_0\Lambda.$$
By \eqref{bound-b} we have
\begin{equation*} \label{bound-b-new}
 \left\vert \widetilde{a}(y)\right\vert+\left\vert\widetilde{b}(y)\right\vert+\left\vert \widetilde{c}(y)\right\vert\leq\Lambda_1, \quad \hbox{for almost every } y\in\mathbb{R}^n.
\end{equation*}
In addition, by \eqref{errore-verticale}, \eqref{limitaz-a-priori} we have respectively
\begin{equation*} \label{errore-verticale-new}
\int_{-1}^{1}\int_{B_{r_0\rho_0^{-1}}}\left\vert U(y,\eta)\right\vert^2dyd\eta\leq \varepsilon^2\left(1-t_0T^{-1}\right)^{-n}
\end{equation*}
and
\begin{equation*} \label{limitaz-a-priori-new}
\max_{\eta\in [-1,1]}\left(\int_{B_1}\left\vert U(y,\eta)\right\vert^2dy+\int_{B_1}\left\vert \partial_{\eta}U(y,\eta)\right\vert^2dy\right)\leq H^2\left(1-t_0T^{-1}\right)^{-n}.
\end{equation*}

Now we apply Proposition \ref{Crucial}. Denoting $s=\rho\rho_0^{-1}$ we have $0<r_0\rho_0^{-1}<s\leq s_0$, therefore

\begin{equation*}
\label{SUCP-new}
\int_{B_s}\left\vert U(y,0) \right\vert^2dy\leq \frac{1}{\left(1-t_0T^{-1}\right)^{n}}\frac{Cs^{-C}(H+e\varepsilon)^2}{\left(\widetilde{\theta}\log \left( \frac{H+e\varepsilon}{\varepsilon}\right) \right)^{\alpha}},
\end{equation*}
where

\begin{equation*}
\label{theta-tilde}
\widetilde{\theta}=\frac{\log (1/Cs)}{\log (1/(r_0\rho_0^{-1}))}.
\end{equation*}

Finally, come back to the variables $x$ and $t$ we get \eqref{SUCP-final}. $\Box$

\bigskip

\textbf{Acknowledgment}

The paper was partially supported by GNAMPA - INdAM.

\end{document}